\patchcmd{\quote}{\rightmargin}{\leftmargin 2em \rightmargin}{}{}
\newtheorem{theorem}{Theorem}[section]
\newtheorem{lemma}[theorem]{Lemma}
\newtheorem{fact}[theorem]{Fact}
\newtheorem{proposition}[theorem]{Proposition}
\newtheorem{corollary}[theorem]{Corollary}
\newtheorem*{theorem*}{Theorem}
\theoremstyle{remark}
\numberwithin{equation}{section}
\newcommand{\Z}{\mathbb{Z}}
\newcommand{\N}{\mathbb{N}}
\newcommand{\Mod}{\mathrm{Mod}}
\newcommand{\R}{\mathbb{R}}
\newcommand{\Q}{\mathbb{Q}}
\newcommand{\cC}{\mathcal{C}}
\newcommand{\cAC}{\mathcal{AC}}
\newcommand{\acre}{\cAC^{re}}
\newcommand{\cP}{\mathcal{P}}
\begin{document}
\title[determining the finite subgraphs of curve graphs]{Determining the finite subgraphs\\ of curve graphs}
\author[Aougab, Biringer, Gaster]{Tarik Aougab,  Ian Biringer and Jonah Gaster}
\date{\today}

\begin{abstract}

We prove that there is an algorithm to determine if a given finite graph is an induced subgraph of a given curve graph.

\end{abstract}

\maketitle


Fix $g,n \in \Z_{\ge0}$, $(g,n)\neq (0,1)$ and let $S=S_{g,n}$ be a surface of genus $g$ with $n$ boundary components.  
Let $\cC(S)$ be the \emph{curve graph} of $S$, whose vertices are isotopy classes of nonperipheral, essential simple closed curves, 
and whose edges connect pairs of curves with the minimum possible intersection number. Unless $S$ is a torus, a once-punctured torus or a $4$-punctured sphere, edges reflect disjointness, while in those cases, the minimum intersection numbers are $1$, $1$, and $2$, respectively.  

Let $\cP_{g,n}$ indicate the collection of finite induced subgraphs of $\cC(S)$.
Our main theorem is the following:

\begin{theorem}
\label{main thm}
There is an algorithm that determines, given a graph $G$ and a pair $(g,n)$, whether or not $G\in \cP_{g,n}$.
In particular, each set $\cP_{g,n}$ is recursive.
\end{theorem}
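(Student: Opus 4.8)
The strategy is to reduce Theorem~\ref{main thm} to an \emph{effective} bound on the size of a realizing configuration, and then run a finite search.

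First, $\cP_{g,n}$ is recursively enumerable. A tuple $(S_{g,n};\gamma_1,\dots,\gamma_k)$ in which the $\gamma_i$ are pairwise non-isotopic essential nonperipheral simple closed curves, each pair in minimal position, is a finite combinatorial object: cutting $S_{g,n}$ along $\Gamma=\bigcup_i\gamma_i$ presents it as a finite union of compact subsurfaces whose gluing pattern, together with a colouring of the cut edges by $\{1,\dots,k\}$, recovers the tuple; conversely, any such gluing that reassembles to $S_{g,n}$ yields such a tuple. One can enumerate these combinatorial types, and from each one reads off the induced subgraph of $\cC(S)$ on $\{\gamma_1,\dots,\gamma_k\}$ --- recording which pairs realize the relevant minimum intersection number (disjointness, except in the three exceptional cases). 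Comparing with $G$ semidecides ``$G\in\cP_{g,n}$'': if $G$ is realized the enumeration finds it. (When $\cC(S)$ is empty --- $S$ a sphere, annulus, or pair of pants --- there is nothing to decide.) So all the content is to make the search terminate in the negative case.

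For that I would prove: there is a \emph{computable} function $N(g,n,k)$ such that every $G\in\cP_{g,n}$ with $k$ vertices is realized by a tuple $(S_{g,n};\gamma_1,\dots,\gamma_k)$ for which $\Gamma=\bigcup_i\gamma_i$ has at most $N(g,n,k)$ self-intersections. Granting this, the algorithm computes $N(g,n,k)$, enumerates the finitely many combinatorial types of that size, and checks whether any induces $G$; together with the previous paragraph this decides $\cP_{g,n}$, and in particular shows each $\cP_{g,n}$ is recursive. The bound $N$ must grow with $k$ --- realizing a graph with many non-edges forces many intersection points --- so the point is the \emph{computability} of $N$, not its magnitude.

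Establishing this bound is the crux and, I expect, the main obstacle. I would argue by contradiction from a \emph{minimal} realization: among all realizations of $G$ pick one minimizing a suitable complexity such as $\sum_{i<j}i(\gamma_i,\gamma_j)$, with ties broken lexicographically, put the $\gamma_i$ in pairwise minimal and general position, and --- since away from the exceptional cases the induced subgraph depends only on the essential subsurface that the curves fill --- assume $\Gamma$ fills $S$, after replacing $S$ by one of finitely many topological types of subsurface if necessary. Now analyze the $4$-valent graph $\Gamma$ and its complementary disks. Once the number of crossings exceeds a threshold computable from $(g,n,k)$, a pigeonhole argument must expose a repeated bounded pattern --- for instance a pair of parallel subarcs, a recurring local picture of the complement, or a long twist region --- along which one can surger or isotope a single $\gamma_i$ to a curve $\gamma_i'$ of strictly smaller total complexity. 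The delicate part, and the real technical crux, is to perform the move so that $\gamma_i'$ is still essential and nonperipheral, remains non-isotopic to each $\gamma_\ell$, and is disjoint from \emph{precisely} the $\gamma_\ell$ that $\gamma_i$ was --- i.e.\ to control the effect of one surgery on the intersections with every other curve simultaneously --- so that the induced graph is unchanged. Such a move contradicts minimality, and the threshold is the desired bound $N$. The exceptional surfaces $S_{1,0},S_{1,1},S_{0,4}$, where an edge of $\cC(S)$ records intersection number $1$, $1$, or $2$, fit the same scheme, using that $\cC(S)$ is then the Farey graph.
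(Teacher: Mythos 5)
Your reduction is exactly the paper's: the statement you isolate (every $G\in\cP_{g,n}$ with $k$ vertices is realized by a curve system with self-intersection bounded by a computable $N(g,n,k)$, after which one enumerates bounded-complexity configurations and compares) is the assertion ($\star$) around which the whole paper is organized. But you have only restated ($\star$), not proved it, and the part you defer --- ``a pigeonhole argument must expose a repeated bounded pattern\dots along which one can surger or isotope a single $\gamma_i$'' while keeping it essential, non-isotopic to the others, and disjoint from exactly the same curves --- is the entire content of the paper, and your proposed mechanism for it does not engage the real difficulty. The obstruction to bounding intersection numbers is not local: a realization can have enormous intersection numbers because some curves differ by high powers of Dehn twists (equivalently, have large subsurface projection distances), and in that situation there is no repeated local pattern in the $4$-valent graph $\Gamma$ along which a single surgery visibly preserves every non-edge of $G$; removing a family of parallel strands between $\gamma_i$ and $\gamma_j$ typically creates or destroys intersections with third curves and can change isotopy classes, and nothing in a crossing-count pigeonhole controls this simultaneously for all $\ell$. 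So ``minimal realization plus local surgery'' is a hope, not an argument, and I see no route to completing it as stated.

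The paper's proof of ($\star$) is structured quite differently, and globally. It works in the arc and curve graph rel endpoints $\acre(S)$ and argues by induction on the complexity $3g-3+n$: a Masur--Minsky type summation formula for intersection numbers in $\acre(S)$ (extending Watanabe, with peripheral annuli added via an explicit ``good annuli'' collar construction) shows that if a configuration cannot be re-embedded with strictly smaller intersection numbers, then all its projections to proper subsurfaces have diameter bounded by the inductive hypothesis, so any large intersection number must come from large distance in $\acre(S)$ itself. It then splits the configuration into clusters that are either of bounded diameter or very far apart, and moves the far-apart clusters back together by explicit mapping classes --- built from common refinements of triangulations carrying the clusters, and from a pseudo-Anosov composition of Dehn twists with computable translation length to keep the relocated clusters non-adjacent --- thereby strictly decreasing intersection numbers, contradicting minimality. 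None of this machinery (subsurface projections, the distance formula, the triangulation-refinement and translation-length lemmas) appears in your sketch, and without some substitute for it your proof has a genuine gap precisely at the step you yourself identify as the crux. A secondary, smaller point: your reduction to the case that $\Gamma$ fills $S$ needs more care, since curves peripheral or inessential in the filled subsurface are still legitimate vertices of $\cC(S)$, but this is repairable; the missing bound is not.
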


The curve graphs of the torus, the punctured torus and the four holed sphere are all isomorphic to the \emph{Farey graph}, whose  vertex set is $\Q \cup \{\infty\}$ and where vertices $\frac ab, \frac pq$  are adjacent if $\mathrm{det} \left (\begin{smallmatrix}
	a & p \\ b & q 
\end{smallmatrix}\right )=\pm1.$
In communication with Edgar Bering, the third author has determined that a finite graph is an induced subgraph of the Farey graph if and only if its connected components are \emph{triangulated outerplanar graphs}, i.e.\ graphs that can be realized in the plane with all vertices adjacent to the unbounded face, and where all other faces are triangles. However, in general no simple  characterization of the induced subgraphs of $\mathcal C(S)$ is known.

 Note that in the statement of Theorem \ref{main thm} the pair $(g,n)$ is not fixed.  In principle, it is harder to produce an algorithm as above that takes $(g,n)$ as input than to produce a different algorithm for every $(g,n)$. For fixed $n$, the two problems are equivalent since every graph $G$ on at most $N$ vertices embeds in the curve graph of any surface of genus at least $N^{2}$ (see Remark $1.3$ of \cite{BeringGaster} and the preceding discussion). On the other hand, as we let the number of punctures grow to infinity, we do not know a quick reduction of the main theorem to the simpler problem of producing an algorithm on each surface independently. That said, in the proof of Theorem \ref{main thm} we give, no extra effort is required for the stronger statement.
 

By work of Koberda \cite{koberda2012right},   whenever a finite graph $G$  embeds  as an induced subgraph of $C(S)$, the \emph{right angled Artin group} (RAAG) $\Gamma(G)$ whose defining graph is $G$ embeds  as a subgroup of the mapping class group $\Mod(S)$.  The converse is not true in general: if $\Gamma(G)$  embeds in $\Mod(S)$, one can only say that $G$ embeds in the \emph{clique graph} of $C(S)$ as an induced subgraph \cite{kim2014geometry}. However, in \cite{kim2013right} Kim--Koberda show that for surfaces with $3g-3+n<3$, any embedding of $\Gamma(G)$ into $\Mod(S)$ does give an embedding of $G $ into $C(S)$  as an  induced subgraph. Hence,

\begin {corollary}
When $S$ is a sphere with at most $5$ punctures, or a torus with at most $2$ punctures, there is an algorithm that determines whether or not a given RAAG embeds in $\Mod(S)$.

\end {corollary}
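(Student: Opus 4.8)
The plan is to read the corollary off from Theorem~\ref{main thm} together with the cited theorems of Koberda \cite{koberda2012right} and Kim--Koberda \cite{kim2013right}; essentially all of the work has already been done. First I would fix the encoding: a RAAG is presented by handing over a finite graph $G$, so that the group in question is $\Gamma(G)$. By Droms' theorem, nonisomorphic graphs yield nonisomorphic RAAGs, so nothing is lost in identifying the input ``RAAG'' with the input ``graph''. Next I would note that the surfaces named are precisely those of complexity $3g-3+n < 3$: for a sphere with $n \le 5$ punctures one has $3g-3+n = n-3 \le 2$, and for a torus with $n \le 2$ punctures one has $3g-3+n = n \le 2$.

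Given this, the argument is a short equivalence. If $G$ embeds in $\cC(S)$ as an induced subgraph, then Koberda's theorem produces an embedding $\Gamma(G) \hookrightarrow \Mod(S)$. Conversely, since $3g-3+n < 3$, the theorem of Kim--Koberda guarantees that \emph{every} embedding $\Gamma(G) \hookrightarrow \Mod(S)$ restricts to an embedding of $G$ into $\cC(S)$ as an induced subgraph. Therefore $\Gamma(G)$ embeds in $\Mod(S)$ if and only if $G \in \cP_{g,n}$, and Theorem~\ref{main thm} --- which in fact accepts $(g,n)$ as part of its input --- decides membership in $\cP_{g,n}$. Feeding in each of the finitely many relevant pairs $(g,n)$ then yields the desired algorithm.

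Since the mathematical content lies entirely in the inputs, I do not expect any real obstacle here; the only point needing a word of care is the convention on how a RAAG is specified. If one instead received an arbitrary finite group presentation and were asked whether the group it defines is abstractly isomorphic to \emph{some} RAAG embedding in $\Mod(S)$, one would first have to recognize the group as a RAAG and recover its defining graph --- a genuinely separate issue that we do not take up. Under the natural reading, in which the RAAG is given together with its defining graph, the corollary follows at once.
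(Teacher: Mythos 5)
Your argument is exactly the one the paper intends: Koberda's theorem gives one direction, the Kim--Koberda result for $3g-3+n<3$ gives the converse, and Theorem~\ref{main thm} (which takes $(g,n)$ as input) decides membership in $\cP_{g,n}$; the complexity check for the listed surfaces is also correct. Your added remarks on the encoding of the input RAAG are a reasonable clarification but do not change the substance, so this matches the paper's proof.
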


After reading the above, one might ask whether there is an algorithm to detect whether a given RAAG embeds in another given RAAG, since Kim--Koberda \cite{kim2014geometry} have shown that this is equivalent to the defining graph of the first embedding in the \emph{extension graph}  of the second, an analogue of the curve graph. Indeed, such an algorithm has been recently given by Casals-Ruiz \cite{casals2015embeddability}, and her proof is somewhat similar in spirit to ours, although in our view it is more complicated.

The graph $C(S)$ is highly symmetric---its  quotient under the action of  the mapping class group $\Mod(S)$ has diameter $1$. If $C(S)$ were locally finite, which it is not, one could use this symmetry to give a trivial proof of Theorem \ref{main thm}, at least for fixed $S$. Namely, one could pick any vertex $v \in C(S)$, and given a graph $G$ on $N$  vertices, just check through all the finitely many subgraphs of the ball $B_{C(S)}(v,N+1)$ to see if the connected components of $G$ all appear.

Our proof of Theorem \ref{main thm} is a refinement of this naive approach. We will show:

\begin{quote}
	($\star$) \it If a graph $G$ on $N$  vertices embeds as an  induced subgraph of $C(S)$, it also embeds as a  system of curves on $S$ whose total self-intersection number is bounded by a computable function of $N$.\rm
\end{quote}
Up to the action of $\Mod(S)$, curve systems with  bounded self-intersection number can be enumerated, say by checking through bounded complexity triangulations of $S$ for curve systems embedded in their $1$-skeleta. So, to check whether a given graph $G$ embeds, we can just  compare $G$ to each such curve system. 

 We prove ($\star$) by induction on the complexity of $S$.  To facilitate this, it is useful to replace $C(S)$  with the \emph{arc and curve graph rel endpoints}, $\acre(S)$. We discuss this graph in \S \ref{MMW},  where we give a Masur--Minsky type summation formula for intersection numbers in $\acre(S)$, using a similar result of Watanabe \cite{Watanabe} that applies to intersection numbers of simple closed curves. Finally, in \S \ref{pfsec}  we prove a stronger technical version of ($\star$) for $\acre(S)$, which finishes the proof of Theorem \ref{main thm}.

\subsection{Acknowledgements}  The first author was supported by NSF grant DMS-- 1502623, and the second author was supported by NSF grant DMS--1611851. Thanks to Edgar Bering, Thomas Koberda, Sang-Hyun Kim and Johanna Mangahas for helpful conversations.

\section{Intersection numbers of arcs rel endpoints}

\label{MMW}

If $S$ is a compact, orientable surface, the \emph{arc and curve graph rel endpoints} of $S$ is the graph $\cAC^{re}(S)$  whose vertices are isotopy classes of non-peripheral, essential simple closed curves (i.e.~`curves'), and isotopy classes rel endpoints of properly embedded, essential arcs (i.e.~`arcs').  Edges connect pairs of vertices that 
intersect minimally among arcs and curves on $S$.

 Note that this  is
not the familiar \emph{arc and curve graph} $\cAC(S)$: when $S$ is closed or is an annulus, then $\cAC(S) := \acre(S)$ as defined above, but otherwise $\cAC(S)$ is a quotient of $\acre(S)$ where arcs are identified if they are properly isotopic. 

\vspace{2mm}

In \cite{Watanabe}, Watanabe shows  that the intersection number of a pair of curves on a surface $S$  can be estimated by summing up the distances between their subsurface projections, a la Masur--Minsky \cite{Masurgeometry2}. His theorem extends and effectivizes previous work of Choi-Rafi \cite{Choicomparison}. Here, we show that his theorem also applies to intersection numbers in $\acre(S)$, if projections to peripheral annuli are added to the sum.


\begin{theorem}\label{mm}There  is a computable function  $C=C(k,\mathfrak X)$ as follows. Given a pair of  vertices $\alpha,\beta$ in $\acre(S)$ and a constant $k>0$, we have
\[
\log \iota( \alpha ,  \beta) \underset{C}{\asymp} \left(  
\sum_{Y\subseteq S}  \left[[ d_Y( \alpha, \beta) \right]]_k +
\sum_{A\subset S} \log  \left[[ d_A( \alpha, \beta) \right]]_k  \right),
\] 
where $Y,A$ above are isotopy classes of subsurfaces of $S$ that intersect both $\alpha$ and $\beta$  essentially, the first sum is taken over non-annular $Y$ and the second sum is taken over (possibly peripheral) annuli $A$. Here, $f \underset{C}{\asymp} \, g$ if $f \leq C g +C \text{ and } g \leq Cf+C.$
\end{theorem}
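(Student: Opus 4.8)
The plan is to derive the formula for $\acre(S)$ from Watanabe's theorem for curves by a "doubling" or "filling in" argument that converts arcs-rel-endpoints into genuine simple closed curves on an auxiliary surface, so that the extra annular projections to peripheral annuli appear naturally. Concretely, given a vertex $\alpha$ of $\acre(S)$ that is an arc, I would attach to each boundary component of $S$ that meets $\alpha$ a once-punctured disk (or, more robustly, pass to the surface $\widehat S$ obtained by coning off each boundary circle to a puncture, keeping track of the puncture), and close up $\alpha$ into a simple closed curve $\widehat\alpha$ by joining its endpoints through the added punctured disks. The key point is that isotopy of arcs \emph{rel endpoints} on $S$ corresponds exactly to isotopy of the curves $\widehat\alpha$ on the capped surface $\widehat S$ together with the data of how $\widehat\alpha$ winds around the new punctures — and this winding is precisely what the peripheral annular projections $d_A(\alpha,\beta)$ record. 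So the first step is to make this correspondence precise: set up $\widehat S$, the capping map $\alpha\mapsto\widehat\alpha$, and check it is well-defined on isotopy-rel-endpoint classes and injective on them.

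Second, I would compare intersection numbers: $\iota_{S}(\alpha,\beta)$ and $\iota_{\widehat S}(\widehat\alpha,\widehat\beta)$ differ by a controlled amount. In one direction, a minimal-position representative on $S$ gives one on $\widehat S$ after closing up, so $\iota_{\widehat S}(\widehat\alpha,\widehat\beta)\le \iota_S(\alpha,\beta)+O(1)$ (the $O(1)$ from forced crossings near the punctures). In the other direction, $\iota_S(\alpha,\beta)\le\iota_{\widehat S}(\widehat\alpha,\widehat\beta)$ up to the winding contribution: if the two arcs wind differently around a shared endpoint-puncture, one must add roughly $|d_A(\alpha,\beta)|$ crossings when one insists on the rel-endpoint isotopy class rather than the free one. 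This is exactly the discrepancy between $\iota_{\acre}$ and $\iota_{\cAC}$, and it is where the $\log\llbracket d_A\rrbracket_k$ terms over peripheral annuli $A$ enter. I would phrase this as: $\log\iota_S(\alpha,\beta)\asymp_C \log\iota_{\widehat S}(\widehat\alpha,\widehat\beta)+\sum_{A\text{ peripheral}}\log\llbracket d_A(\alpha,\beta)\rrbracket_k$, with all constants computable.

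Third, I would apply Watanabe's theorem on $\widehat S$ to expand $\log\iota_{\widehat S}(\widehat\alpha,\widehat\beta)$ as $\sum_Y\llbracket d_Y(\widehat\alpha,\widehat\beta)\rrbracket_k+\sum_{A\subset\widehat S}\log\llbracket d_A(\widehat\alpha,\widehat\beta)\rrbracket_k$, and then match subsurfaces of $\widehat S$ with subsurfaces of $S$. Every subsurface of $S$ meeting $\alpha,\beta$ essentially corresponds to a subsurface of $\widehat S$ (capping its boundary, when possible) and the projections agree up to bounded error by the standard behavior of subsurface projection under puncturing/filling; conversely subsurfaces of $\widehat S$ either come from $S$ or are small neighborhoods of the new punctures, and the latter contribute projections comparable to the peripheral-annulus projections of $\alpha,\beta$ in $S$. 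Collecting terms gives the claimed sum, with non-annular $Y$ and all annuli $A$ (now including the peripheral ones) of $S$. A bit of care is needed because $\acre(S)$ and $\cAC(S)$ genuinely differ: for arcs, projection to the \emph{peripheral} annulus at an endpoint is nontrivial data, and the formula must not double-count it — so I would make sure the "$\widehat S$" comparison absorbs this cleanly.

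The main obstacle I expect is the careful bookkeeping in the comparison of intersection numbers near endpoints — precisely controlling how rel-endpoint isotopy forces extra intersections in terms of the winding number $d_A(\alpha,\beta)$, and making every error term computable and uniform. In particular one must handle the sporadic small surfaces (annulus, pair of pants, once-punctured torus, four-holed sphere), where $\cAC$ and $\acre$ behave differently and where Watanabe's theorem itself may need its low-complexity base cases invoked directly. Everything else is an application of existing subsurface-projection technology (bounded geodesic image, behavior of projections under subsurface inclusion and under filling punctures), which I would cite rather than reprove.
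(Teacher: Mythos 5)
Your overall route is the same as the paper's: reduce to Watanabe's distance formula for honest curves (you do this by capping boundary circles off to punctures and closing arcs up through the punctured disks; the paper instead uses the map $\mathcal{AC}(S)\to\mathcal{C}(S)$ that concatenates an arc with boundary segments), and account for the difference between isotopy rel endpoints and free isotopy by the peripheral annular coefficients. The genuine gap is in your second step. The claim that $\iota_{\acre(S)}(\alpha,\beta)$ is comparable, with computable error, to $\iota_{\widehat S}(\widehat\alpha,\widehat\beta)$ plus the winding contributions $\sum_A d_A(\alpha,\beta)$ over peripheral annuli is not bookkeeping: it is the heart of the theorem, and your sketch asserts it rather than proves it. In the paper this is exactly the ``Good annuli'' lemma (Lemma \ref{intersecting with the annulus}): one chooses a hyperbolic metric in which each boundary component is a geodesic of length one and takes collar neighborhoods of a carefully computed radius, so that (i) every simple geodesic meets the collar boundaries minimally, (ii) the annular coefficient $d_{A_i}(\alpha,\beta)$ is within $2$ of the number of intersection points of the geodesic representatives inside $A_i$, and (iii) $\iota_{\mathcal{AC}(S)}(\alpha,\beta)$ is within $2$ of the number of intersection points outside the collars; summing these gives the needed two-sided additive comparison at the level of intersection numbers, before any logarithms are taken. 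Some argument of this type, giving both the upper and the lower bound with error independent of the arcs, must be supplied; it does not follow formally from bounded geodesic image or the usual projection machinery, and your plan names the difficulty (``the main obstacle'') without offering a mechanism to resolve it.

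Two smaller corrections. First, the capping map is not injective on isotopy-rel-endpoints classes: the Dehn twist about a loop encircling a single puncture is trivial in the mapping class group of $\widehat S$, so an arc and its twist about a boundary component have freely isotopic cappings; only the pair (capped curve, winding data) is faithful. This is consistent with the rest of your plan, since you carry the winding separately, but the injectivity claim as stated is false. Second, when you invoke Watanabe on $\widehat S$ and match subsurfaces, note that annuli about the new punctures are not essential in $\widehat S$, so there is no double counting with your added peripheral terms, but you do need that projections of $\alpha$ and $\widehat\alpha$ to the surviving subsurfaces agree up to uniformly bounded error; this is the analogue of the paper's observation that the arc-to-curve map changes intersection numbers by at most $2$ and coarsely preserves all subsurface projection distances, and it should be stated and checked rather than left implicit.
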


 To understand the statement, recall that if $Y\subset S$  is a nonannular subsurface the \emph{subsurface projection} $\pi_Y : \acre(S) \longrightarrow \cAC(Y)$ takes a vertex $\alpha$  to its intersection with $Y$, say after $Y$ and $\alpha$ are put in minimal position. Note that the co-domain of $\pi_Y$ is not $\acre(Y)$, since there is no canonical embedding of $Y\subset S$.
 A different definition is needed for $\pi_A$ when $A$ is an annulus, since then $\cAC(S)=\acre(S)$ and arcs are considered up to isotopy rel endpoints. Here, the cover $S_A$ of $S$ corresponding to $\pi_1 A$  compactifies to an annulus, and we let $\pi_A(\alpha)$ be  any lift of $\alpha$ to this annulus that  connects its two boundary components, see \cite{Masurgeometry2}.  One then defines $$d_Y(\alpha,\beta):=d_{\cAC(Y)}(\pi_Y(\alpha),\pi_Y(\beta)), \ \ d_A(\alpha,\beta):=d_{\cAC(S_A)}(\pi_A(\alpha),\pi_A(\beta)),$$  Also, by definition $[[n]]_k=0$ if $n<k$ and $[[n]]=n$  otherwise, and the $\log$ appearing in the statement of the theorem is a modified version of the logarithm so that $\log(0)= 0$. 
  
If $A$ is a peripheral annulus, the definition of $\pi_{A}$ is the same as in the non-peripheral case, the only difference being that the cover corresponding to $\pi_{1}A$ will have one geodesic boundary component and thus only needs to be compactified on one end. We stress that here $\alpha,\beta$  are vertices of $\acre(S)$, so $\iota(\alpha,\beta) $ means the minimum number of intersections between arcs/curves \emph{isotopic rel endpoints} to $\alpha,\beta$.  It is for this reason that peripheral annuli are included in the summation.

The following lemma is the heart of Theorem \ref{mm},  and will also be used in the proof of the  Proposition \ref {technical lemma}.

\begin{lemma}[Good annuli] \label{intersecting with the annulus}  Assume $S $ is a compact, orientable surface  that is not an annulus, and $\gamma \subset S$  is a (potentially peripheral) simple closed curve. Then there is a hyperbolic metric on $S$ with geodesic boundary, and a metric neighborhood $A$ of the geodesic representative of $\gamma$, such that 
\begin {enumerate}
\item  any simple geodesic $\alpha$ in $S$ is in minimal position with respect to $\partial A$, so that its intersection with $A$  is  a subset of the vertices of $\acre(A)$.
\item[(2)] for any two simple geodesics $\alpha,\beta$ on $S$, the projection distance $d_{A}(\alpha,\beta)$ is within $2 $ of $d_{\acre(A)}(\alpha \cap A,\beta \cap A)$.
\end{enumerate}
 Moreover,  if $\partial S = \gamma_1 \cup \cdots \cup \gamma_n$, then there is a hyperbolic metric and a collection of associated annuli $A_1,\ldots, A_n$ satisfying (1) and (2) such that 
 \begin{enumerate}
\item[(3)] $\iota_{\mathcal {AC}(S)}(\alpha,\beta)$ is within $2 $ of $|\alpha  \cap \beta \cap (S \setminus \cup_i A_i)|$.
\end {enumerate}
\end{lemma}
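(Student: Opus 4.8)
The plan is to choose the hyperbolic metric on $S$ so that $\gamma$ --- and, for the last assertion, all of $\gamma_1,\dots,\gamma_n$ simultaneously --- is extremely short, and then to take $A$ (resp.\ each $A_i$) to be the standard collar of the corresponding geodesic, whose width $w$ tends to $\infty$ as the length tends to $0$. The degree of shortness gets fixed only at the end, once it is clear how short is short enough to make the three ``bounded errors'' below each at most $2$. Granting this, part (1) is essentially the collar lemma together with standard estimates on geodesics in thin collars: a simple geodesic disjoint from $\gamma$ misses the embedded collar $A$ entirely, while one crossing $\gamma$ meets $A$ in a union of arcs running straight across from one boundary component to the other, since a geodesic cannot wind around inside a collar that is thin relative to its circumference without self--intersecting. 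Hence $\alpha$ is in minimal position with $\partial A$ and $\alpha\cap A$ is a union of vertices of $\acre(A)$; for the half--collar $A_i$ of a boundary curve $\gamma_i$ the same holds, and the only essential arcs of a half--collar are the (at most two) ones carrying an endpoint of $\alpha$.

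The heart of the lemma is part (2), which I would approach by transporting everything into the annular cover. Since $A\hookrightarrow S$ is $\pi_1$--injective with image a conjugate of $\langle\gamma\rangle=\pi_1(S_A)$, the collar $A$ lifts homeomorphically into $S_A$, and the lift extends to an embedding of $A$ as a core sub--annulus $\widehat A$ of the compactification $\overline{S_A}$; the complement $\overline{S_A}\smallsetminus\widehat A$ is a pair of annular flares (just one flare when $\gamma$ is peripheral). Up to a uniformly bounded additive error, both $d_A(\alpha,\beta)$ and $d_{\acre(A)}(\alpha\cap A,\beta\cap A)$ are measured by a relative winding number in their respective annuli, namely the difference in the number of times $\pi_A(\alpha)$ and $\pi_A(\beta)$ --- resp.\ the arcs $\alpha\cap A$ and $\beta\cap A$ --- wind from one boundary component of the annulus to the other. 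Since $\pi_A(\alpha)$ is the distinguished lift of the geodesic $\alpha$, a short analysis of its signed distance $\rho$ to $\gamma$ (a turning point of $\rho$ is forced to be a local minimum when it occurs at $\rho>0$ and a local maximum when it occurs at $\rho<0$) shows that $\pi_A(\alpha)$ crosses $\widehat A$ monotonically and exactly once, with its $\widehat A$--part a lift of a component of $\alpha\cap A$; thus the two winding numbers differ only by the winding that $\pi_A(\alpha)$ and $\pi_A(\beta)$ accumulate in the flares. Writing the metric near $\gamma$ in Fermi coordinates as $d\rho^2+\cosh^2\rho\, dt^2$, Clairaut's relation $\cosh^2\rho\cdot\frac{dt}{ds}=\text{const}$ and the unit--speed condition force the Clairaut constant to be bounded by $\cosh\rho$ at every point, from which the total change of $t$ accumulated by a geodesic outside a collar of width $w$ is seen to tend to $0$ as $w\to\infty$. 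Choosing $\gamma$ short enough makes the winding in each flare less than one full turn, so the two winding numbers --- and hence $d_A(\alpha,\beta)$ and $d_{\acre(A)}(\alpha\cap A,\beta\cap A)$ --- agree to within $2$.

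For part (3), I would fix the metric so that every $\gamma_i$ is short and let $\alpha,\beta$ be the geodesic representatives of their isotopy classes \emph{rel endpoints}, so that $|\alpha\cap\beta|=\iota_{\acre(S)}(\alpha,\beta)$. Passing from this to $\iota_{\cAC(S)}(\alpha,\beta)$ amounts to sliding the at most two endpoint--arcs of $\alpha$ (and of $\beta$) around the $\gamma_i$ to remove intersections; because $\gamma_i$ is short, all of this sliding --- and all of the resulting cancellation --- is confined to $\bigcup_i A_i$. Making the endpoint--arcs radial inside their collars yields representatives that agree with $\alpha$ and $\beta$ off $\bigcup_i A_i$ and can be made disjoint inside each $A_i$, which gives $\iota_{\cAC(S)}(\alpha,\beta)\le |\alpha\cap\beta\cap(S\smallsetminus\bigcup_i A_i)|$; in the other direction, since a free isotopy can delete only intersections that live in $\bigcup_i A_i$ (up to a bounded boundary effect, at most $2$ with the chosen degree of shortness), one gets $|\alpha\cap\beta\cap(S\smallsetminus\bigcup_i A_i)|\le\iota_{\cAC(S)}(\alpha,\beta)+2$. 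Combining the two inequalities proves (3).

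I expect the real obstacle to be the estimate inside part (2): pinning down the copy of $A$ inside the annular cover and controlling the winding a geodesic lift accumulates \emph{outside} it --- this is precisely where the freedom to shrink $\gamma$ (and the $\gamma_i$) is used, and it is also what powers part (3). Parts (1) and (3) are, by comparison, bookkeeping about disjoint arcs in annuli.
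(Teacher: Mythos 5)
Your overall strategy---put a standard collar around $\gamma$ (resp.\ the $\gamma_i$) and control how much twisting a simple geodesic can accumulate outside it---is close in spirit to the paper's proof, which normalizes $\gamma$ to have length one, takes the collar radius $\sinh^{-1}(1/\sinh(1/2))$, and exploits the tangency property that every geodesic $[x,g(x)]$ is tangent to the lifted collar $\tilde A$, comparing the translates $g^i(\tilde\beta)$ that meet $\tilde\alpha$ with those that meet $\tilde\alpha\cap\tilde A$. Your Fermi/Clairaut estimate is a legitimate substitute for that tangency argument in (2), but the decisive step is asserted rather than proved: from ``the winding outside a width-$w$ collar tends to $0$ as $w\to\infty$'' you cannot directly conclude ``less than one full turn,'' because a full turn costs $\ell(\gamma)$, which also tends to $0$ as you shrink $\gamma$. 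One must compare rates (for the standard collar the flare winding is of order $\ell^2$ against a turn of size $\ell$, so the claim is true --- and in fact already true at $\ell=1$, which is why the paper never needs to shrink $\gamma$), and the bookkeeping that turns ``less than one turn in each of the four flare ends, plus the conversion between relative winding and arc-graph distance'' into the stated additive constant $2$ is not done.

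The genuine gap is in (3). Your first inequality, $\iota_{\cAC(S)}(\alpha,\beta)\le|\alpha\cap\beta\cap(S\setminus\cup_iA_i)|$ via radial terminal segments, is fine, but the reverse inequality is exactly the content of (3), and your justification (``a free isotopy can delete only intersections that live in $\cup_iA_i$, up to a boundary effect of at most $2$, because $\gamma_i$ is short'') restates the claim rather than proving it. A priori an intersection that disappears once endpoints may move need not lie near $\partial S$: in the universal cover such an intersection corresponds to a lift $\tilde\beta$ incident to $\tilde\gamma_a$ or $\tilde\gamma_b$ that crosses $\tilde\alpha$, and two geodesics emanating from the same boundary geodesic can cross arbitrarily far from it. The paper handles this by identifying $\iota_{\cAC(S)}(\alpha,\beta)$ with the number of lifts of $\beta$ crossing $\tilde\alpha$ whose incident boundary geodesics link with $\tilde\gamma_a,\tilde\gamma_b$, and then re-running the collar/tangency argument from (2) to show that at most one exceptional lift per end of $\alpha$ crosses $\tilde\alpha$ outside the collars; some argument of this kind (e.g.\ a quantitative version of your winding estimate applied to the terminal segments) is required. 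Relatedly, your justification of (1) is off: for a very short core the collar is \emph{wide} with boundary circles of length about $2$, the danger is not ``winding around inside'' but an arc entering and leaving $A$ through the same boundary circle, and the claim that a simple geodesic disjoint from $\gamma$ misses $A$ entirely itself needs a collar-lemma/tangency input and must cover geodesic arcs rel endpoints, not only closed geodesics.
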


 Recall that $d_A(\alpha,\beta) :=d_{\acre(S_A)}(\pi_A(\alpha),\pi_A(\beta))$, where $S_A$ is the annular cover of $S$ corresponding to $\pi_1 A$, and $\pi_A $ is defined by taking an appropriate lift. So, the point of (2) is that up to an additive error, one can also define the projection distance to an annulus by intersecting. In (3), $\iota_{\mathcal {AC}(S)}(\alpha,\beta)$  is  the usual intersection number in $\mathcal {AC}(S)$.  That is, it is the minimum number of intersections that we can realize using arcs properly homotopic to $\alpha,\beta$, but where the homotopy can move the endpoints.

\begin{proof} 
 Pick a hyperbolic metric on $S$ with geodesic boundary such that $\gamma $ is a geodesic with length one. (If $\gamma$ is peripheral, first homotope it to be a boundary component of $S$.)  By the Collar Lemma \cite[Lem 13.6]{Farbprimer},  if we set $$r=\sinh^{-1}\left ( \frac 1{\sinh(\frac 12)} \right ),$$ then  the metric $r$-neighborhood of $\gamma$ is an embedded annulus $A$. This radius $r$ has the following (related) property. Regard $\gamma$ as the quotient of a geodesic $\tilde \gamma$ in $\mathbb H^2$ by an isometry $g : \mathbb H^2 \longrightarrow \mathbb H^2$ stabilizing $\tilde \gamma$, and let $\tilde A$ be the $r$-neighborhood of $\tilde \gamma$ in $\mathbb H^2$. 
The value of $r$ is chosen so that whenever $x\in \partial_\infty \mathbb H^2$ is not one of the  endpoints of $\tilde \gamma$, the  hyperbolic geodesic joining $x,g(x)$ is tangent to $\tilde A$ (see Figure \ref{annulusPic}(a)): 
Briefly, conjugate $g$ so that $g(z)=ez$ and so that $x=1$, and consider the right triangle in Figure \ref{annulusPic}(a) with base angle $\theta$, whose hypotenuse lies on the $x$-axis. A simple calculation in $\mathbb{H}^2$ gives
$$r = \sinh^{-1}(\cot \theta) = \sinh^{-1} \frac {\sqrt{\left (\frac {e+1}2 \right )^2  - \left (\frac {e- 1}2\right )^2}}{(e-1)/2} = \sinh^{-1}\left ( \frac 1{\sinh(\frac 12)} \right ).$$
 It follows that $A$ satisfies (1).  Indeed, any  geodesic $\alpha$ in $S$ that intersects some component of $\partial A$ nonminimally has a lift $\tilde \alpha$ that intersects $\partial \tilde A \subset \mathbb H^2$ nonminimally. 
This implies $g(\tilde \alpha)\cap \tilde \alpha \neq \emptyset$, so $\alpha$  cannot be simple (see Figure \ref{annulusPic}(a)).  

\begin{figure}[t]
\centering
\includegraphics{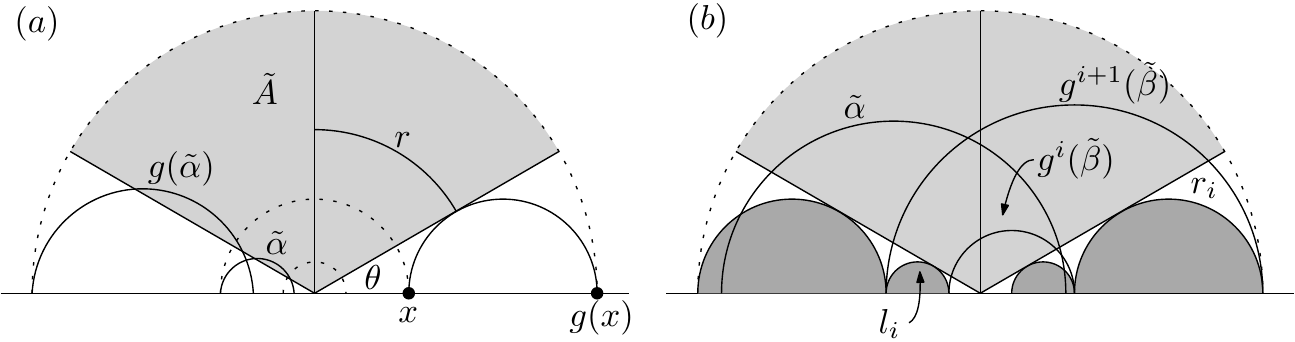}
\caption{}
\label{annulusPic} 
\end{figure}

To prove (2), let $\alpha,\beta$  be simple geodesics on $S$ and $\tilde \alpha,\tilde \beta$ be lifts of $\alpha,\beta$ in $\mathbb H^2$. Since the distance in the arc and curve graph between two arcs in an annulus is their geometric intersection number, we have that
$$d_A(\alpha,\beta) =d_{\mathcal{AC}(S_A)}(\pi_A(\alpha),\pi_A(\beta))= | \{ i \in \Z \ | \ g^i(\tilde \beta) \cap \tilde \alpha \neq \emptyset \}|.$$ 
And similarly, we have from (1) that   $$d_{\acre(A)}(\alpha \cap A,\beta \cap A) = | \{ i \in \Z \ | \ g^i(\tilde \beta) \cap \tilde \alpha \cap \tilde A \neq \emptyset \}|.$$
So, to prove (2) it suffices to show that at most two of the $g^i(\tilde \beta)$  can intersect $\tilde \alpha$ outside of $\tilde A$. But this follows immediately from Figure \ref{annulusPic}(b). Namely, for each $i$ draw the geodesics $l_i,r_i$ joining the left (resp.\ right) endpoint of $g^i(\tilde \beta)$ to that of $g^{i+1}(\tilde \beta)$. Then each $l_i,r_i$ is tangent to $\partial \tilde A$, and bounds a half-plane outside of $\partial \tilde A$, shaded in dark grey in Figure \ref{annulusPic} (b). The geodesic $\tilde \alpha$ is then the union of at most five segments: one in $\partial \tilde A$, at most two in shaded half planes,  and at most two in the remaining white regions. Since $\tilde \alpha$  can intersect at most one iterate $g^i(\tilde \beta)$ per white component, (2) follows.

For property $(3)$ above, choose similarly a hyperbolic metric on $S$ so that each of the boundary components $\gamma_1,\ldots,\gamma_n$ are geodesics with length $1$, and let $A_1,\ldots,A_n$ be their $r$-neighborhoods on $S$. The Collar Lemma implies that all the $A_i$ are disjoint, in addition to being embedded annuli; see the paragraph after the proof of \cite[Lem 13.6]{Farbprimer}.  As before, fix simple geodesic arcs $\alpha,\beta$ in $S$.

The universal cover of $S$  is a convex subset $\tilde S \subset \mathbb{H}^2$ bounded by a collection of bi-infinite geodesics.  The intersection number $\iota(\alpha,\beta)$ can be computed by fixing a lift $\tilde \alpha$ of $\alpha$, and counting the number of lifts of $\beta$ that intersect it. Let's suppose $\alpha$ has  endpoints on boundary components $\gamma_a,\gamma_b$ of $S$, with possibly $a=b$, let $\tilde \gamma_a,\tilde \gamma_b$ be the boundary components of $\tilde S$ incident to $\tilde \alpha$, and let $\tilde A_a,\tilde A_b$ be the metric $r$-neighborhoods of these $\tilde \gamma_a,\tilde \gamma_b$ in $\tilde S$. Then
\begin{align}\label{firsteq} |\alpha  \cap \beta \cap (S \setminus \cup_i A_i)| &= \big |\big\{\text{lifts } \tilde \beta \text{ of } \beta \ \big | \ \tilde \beta \cap \tilde \alpha \cap (\tilde S \setminus (\tilde A_a \cup \tilde A_b) )\neq \emptyset \big \} \big | \\
\label{2ndeq}\iota_{\mathcal {AC}(S)}(\alpha,\beta)&=\big |\big\{\text{lifts } \tilde \beta \text{ of } \beta \ \big | \ \tilde \beta \cap \tilde \alpha \neq \emptyset \text{ and } \tilde \beta \cap (\tilde \gamma_a \cup \tilde \gamma_b) = \emptyset \big \} \big |\end{align}
For \eqref{firsteq}, note that  $\alpha $ does not enter any of the constructed annuli on $S$ other than $A_a,A_b$, nor does it enter $A_a,A_b$ except at initial and terminal segments, by property (1) established above. For \eqref{2ndeq},   note that the right side is the number of lifts $\tilde \beta$ that are incident to boundary components of $\tilde S$ that \emph{link} with $\tilde \gamma_a$ and $\tilde \gamma_b$,  meaning that they alternate with $\tilde \gamma_a$ and $\tilde \gamma_b$ in the  cyclic order on $\partial \tilde S \cup \partial_\infty \tilde S \cong S^1$. If one pinches the boundary components of $S$ to cusps, the geodesics in the homotopy classes of $\alpha,\beta$ in the resulting surface  have intersection number $\iota_{\mathcal {AC}(S)}(\alpha,\beta)$. One calculates this intersection number in $\tilde S$ by counting linked lifts, and clearly each $\tilde \beta$ from the right side of \eqref{2ndeq} contributes such a linked lift after pinching, while $\tilde \beta$ that are incident to either $\tilde \gamma_a$ or $\tilde \gamma_b$ become asymptotic to $\tilde\alpha$ after pinching.


Now  the set on the right side of \eqref{2ndeq} is \emph{contained} in the set on the right side of \eqref{firsteq}, since by property (1) of the annuli, the geodesic $\tilde \beta$ only enters $\tilde A_a,\tilde A_b$ if it is incident to either $\tilde \gamma_a$ or $\tilde \gamma_b$. So, for property (3) it suffices to show that the number of $\tilde \beta$  that are incident to either $\tilde \gamma_a$ or $\tilde \gamma_b$, but intersect $\tilde \alpha$  outside of the annuli $\tilde A_a,\tilde A_b$, is at most $2$. However, this follows immediately from the same argument we used to prove property (2) above. There is at most one such $\tilde \beta$  incident to $\tilde \gamma_a$ --- we get one instead of two since $\gamma_a$ is a boundary component of $S$, and the previous argument gave one intersection per side ---  and at most one incident to $\tilde \gamma_b$.\end{proof}

 We are now ready to prove  the desired Masur--Minsky  distance formula for intersection numbers in $\acre(S)$, using work of Watanabe \cite{Watanabe}.

\begin{proof}[Proof of Theorem \ref{mm}]
If $\alpha$ and $\beta$ are simple closed curves, then the desired inequality is the content of Theorem $1.5$ of \cite{Watanabe}.  This result can be applied more generally to estimate the intersection number of  any pair of vertices in $\mathcal{AC}(S)$, with slight changes in the constants. Indeed, there is a map $\mathcal{AC}(S)\longrightarrow \mathcal{C}(S)$ that takes an arc to any simple closed curve constructed by concatenating it with a segment (or two) of $\partial S$, and this map changes intersection numbers by at most $2$, and coarsely preserves all distances between subsurface projections.

So, suppose that $\alpha,\beta$ are arcs in $S$.  We want to estimate the intersection number $\iota(\alpha,\beta)$, where now we are not allowed to move the endpoints of $\alpha,\beta$  when we homotope them to be in minimal position. Let $A_1,\ldots,A_n$  be the peripheral annuli in Lemma \ref{intersecting with the annulus}, part (3), consider $S $ with the associated hyperbolic metric, and homotope $\alpha,\beta$ rel endpoints to be geodesics. Then 
\begin{align*}
	\iota(\alpha,\beta)  = | \alpha \cap \beta | & = |\alpha  \cap \beta \cap (S \setminus \cup_i A_i)| + \sum_{i=1}^n | \alpha \cap \beta \cap A_i |,
\end{align*}
 which by Lemma \ref{intersecting with the annulus}  is within $2+2n$ of 
 \begin{equation}\iota_{\mathcal {AC}(S)}(\alpha,\beta) +\sum_{i=1}^n d_{A_i}(\alpha,\beta).\label{sum}\end{equation}
So as $\log (x_1+\cdots + x_{n+1}) \asymp_{n+1} \log(x_1) +\cdots + \log(x_{n+1})$, it follows from \eqref{sum} and Watanabe's result for $\mathcal {AC}(S)$ that for some computable $C=C(k,\mathfrak X)$, we have
$$
\log \iota( \alpha ,  \beta) \underset{C}{\asymp} \left(  
\sum_{Y\subseteq S}  \left[[ d_Y( \alpha, \beta) \right]]_k +
\sum_{\substack{\text{nonperipheral} \\ A\subset S}} \log  \left[[ d_A( \alpha, \beta) \right]]_k  \right) + \sum_{i=1}^n \log d_{A_i}(\alpha,\beta),
$$
which proves Theorem \ref{mm} after adjusting $C$ to account for the missing $[[ \ \  ]]_k$ in the last summation. \end{proof}

\section{The proof}
\label{pfsec}

 In this section we prove the following proposition, which clearly implies $(\star)$ from the introduction, and therefore the main theorem.

\begin{proposition}
\label{technical lemma}

There is a computable function $f:\N^2\to\N$ as follows. Write $\mathfrak X= 3g-3+n$ for the  complexity of the surface $S_{g,n}$, and let $G$  be a graph whose vertices can be partitioned into  $N$ cliques. 
If $\phi : G \longrightarrow \acre(S_{g,n})$ is an embedding of $G$ into the arc and curve graph rel endpoints, then either
 $$\iota	  (\phi(\alpha),\phi(\beta)  ) \leq f(\mathfrak X,N), \ \ \forall \text{ vertices }  a,b \in G$$ or  there is another embedding $\psi : G \longrightarrow \acre(S_{g,n})$ such that 
 \begin{enumerate}
 \item $\iota (\psi(a),\psi(b)  ) \leq \iota	  (\phi(a),\phi(b)  ) $  for all vertices $a,b$ of $G$, and for \emph{some} choice of $a,b$ the inequality is strict,
 \item  for every vertex $a $ of $G$, the vertices $\phi(a),\psi(a) \in \acre(S_{g,n})$  have the same type (arc/curve) and the same endpoints on $\partial S$  if they are arcs.
\end {enumerate}
\end{proposition}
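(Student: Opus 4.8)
The plan is to argue by induction on the complexity $\mathfrak X$ of $S$, using the Masur--Minsky type distance formula of Theorem \ref{mm} to detect when some intersection number is large, and then using the inductive hypothesis on a smaller subsurface to produce the improved embedding $\psi$. Suppose that $\phi$ is an embedding of $G$ with $\iota(\phi(a),\phi(b))>f(\mathfrak X,N)$ for some pair of vertices $a,b$. By Theorem \ref{mm}, for a suitable threshold $k$, one of the two summands in the distance formula for the pair $\phi(a),\phi(b)$ must be large, so there is a subsurface $Y\subseteq S$ (non-annular, or a possibly peripheral annulus) with $d_Y(\phi(a),\phi(b))$ enormous. The key structural input is that every vertex of $G$ is in one of $N$ cliques, and vertices in a common clique pairwise intersect minimally in $\acre(S)$; hence for each clique $\mathcal K$, the projections $\pi_Y(\phi(c))$, $c\in\mathcal K$, are a bounded-diameter set in $\mathcal{AC}(Y)$ (a clique projects to a clique, which has diameter $1$, up to the bounded error coming from ``minimal intersection $\ne$ disjoint'' in the exceptional surfaces). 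So among the at most $N$ clique-projections, the one containing the image of $a$ and the one containing the image of $b$ are at distance comparable to $d_Y(\phi(a),\phi(b))$ in $\mathcal{AC}(Y)$, which is huge; in particular some clique's projection has large diameter or the $N$ projections are spread out along a long geodesic in $\mathcal{AC}(Y)$.

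Next I would exploit this to reduce the complexity. Cut $S$ along the relevant multicurve (the core of $Y$ together with $\partial Y$, or the core of the annulus $A$), obtaining a surface $S'$ of strictly smaller complexity. The bounded family of clique-projections, realized as arcs/curves on $S'$ together with the boundary traces, gives an embedding of $G$ (or of a graph still partitioned into $N$ cliques, after possibly merging along $\partial Y$) into $\acre(S')$. By the inductive hypothesis applied to $S'$, either all these projected intersection numbers on $S'$ are bounded by $f(\mathfrak X-1, N)$, or we can strictly decrease one of them by a move fixing types and endpoints on $\partial S'$. In the first case the projections on $S'$ are small but $d_Y(\phi(a),\phi(b))$ is large, which must be accounted for entirely by how the arcs wind through the annular pieces around $\partial Y$ — this ``winding'' can be unwound (a Dehn-twist-type surgery supported near $\partial Y$, or a relative isotopy inside a good annulus as in Lemma \ref{intersecting with the annulus}) to produce $\psi$ with strictly smaller $\iota(\psi(a),\psi(b))$; one checks, using the summation formula and the fact that a surgery supported near $\partial Y$ only changes projections to $Y$ and to annuli on $\partial Y$ (leaving all other $d_Z$ fixed or decreased), that no other intersection number increases, and that types and endpoints on $\partial S$ are preserved. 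In the second case, we glue the improving move on $S'$ back in to improve $\phi$ on $S$.

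The base case is when $\mathfrak X$ is minimal, e.g.\ $S$ is a sphere with few punctures, a once- or twice-punctured torus, or an annulus, where $\acre(S)$ is (a variant of) the Farey graph; there, large intersection number for a bounded graph forces an explicit ``obvious'' surgery (a reducing move on a bigon-free representative) by a direct planarity/triangulation argument, so one can take $f$ on these surfaces to be an explicit computable function of $N$, and since each inductive step increases $f$ by a computable amount, $f(\mathfrak X, N)$ is computable. The main obstacle, and the place requiring the most care, is the gluing step: controlling that the move produced by the inductive hypothesis on $S'$, or the unwinding near $\partial Y$, does not \emph{increase} $\iota(\psi(c),\psi(d))$ for any other pair $c,d$ — this is exactly why the precise two-sided bound in Theorem \ref{mm}, together with monotonicity of subsurface projections under surgeries supported on an essential subsurface, is needed, and why the statement of Proposition \ref{technical lemma} insists on the non-increase condition (1) rather than merely bounding the maximum intersection number. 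A secondary subtlety is bookkeeping the $N$-cliques structure through cutting: cutting along $\partial Y$ can split a curve of $G$ into several arcs, so one must verify the resulting graph on $S'$ still has its vertices covered by $O(N)$ cliques with a computable bound, which is where the dependence of $f$ on $N$ (rather than on $|V(G)|$) gets used.
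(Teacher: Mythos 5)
There is a genuine gap: your argument never handles the case $Y=S$, which is exactly the hard case and the one the paper spends most of its effort on. The sum in Theorem \ref{mm} includes the term $[[d_S(\alpha,\beta)]]_k$ for the whole surface, so ``some intersection number is huge'' only tells you that \emph{some} $Y\subseteq S$ has large $d_Y$, and it may well be that every \emph{proper} subsurface projection is bounded while the images $\phi(a),\phi(b)$ are simply far apart in $\acre(S)$ itself. In that situation there is no multicurve to cut along, no smaller surface $S'$ to induct on, and no ``winding near $\partial Y$'' to unwind, so your reduction step does not apply. (Relatedly, your ``first case'' for non-annular $Y$ is vacuous: once you cut, $Y$ is a component of $S'$, and bounded intersection numbers of the projected arcs there already force $d_Y$ bounded; only for annuli does the winding picture make sense.) The paper deals with the $Y=S$ case by a completely different mechanism: decompose the image of $G$ into ``small clusters with large gaps'' (Fact \ref{cluster}) and move the clusters toward each other by mapping classes, using Lemma \ref{lemma1} (computable common refinements of triangulations) to make inter-cluster intersection numbers bounded and Lemma \ref{lemma2} (a homeomorphism with large translation length on $\acre(S)$ but controlled intersection with a triangulation) to keep the clusters non-adjacent, so that the resulting re-embedding strictly decreases some intersection number. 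Nothing in your plan plays the role of these two lemmas.

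A secondary structural problem is the direction of your induction. You try to directly manufacture an improving move and then must verify that it increases no other intersection number; you flag this as ``the main obstacle'' but offer no argument, and indeed a Dehn-twist-type untwisting near $\partial Y$ changes the projections of \emph{every} vertex crossing that annulus, so non-increase of all other $\iota(\psi(c),\psi(d))$ is not automatic. The paper avoids this entirely by arguing contrapositively: assume no improving re-embedding of $G$ exists; then for each proper $Y$, the restriction $H=\phi(G)\cap Y\subset\acre(Y)$ (realized geodesically, using the good annuli of Lemma \ref{intersecting with the annulus}) also admits no improving re-embedding, because any such move would extend to one of $G$ in $\acre(S)$; induction then bounds all proper subsurface projections at once, and only the $d_S$ term survives in Theorem \ref{mm}. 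If you want to salvage your outline, you would need both this contrapositive use of the inductive hypothesis and a substitute for the cluster-moving argument in the top-level case.
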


The proof will be by induction on $\mathfrak X$.  In some sense, the base case is the empty surface, in that the argument we will give below also works directly for an annulus (the  nonempty surface for which $\mathfrak X$ is minimal). But although it is not strictly necessary, we think it will be informative and comforting to the reader to start by proving the proposition  directly when $S$ is an annulus.

\begin{proof}[Proof for $\mathfrak X=-1$] Here, $\cAC^{re}(S)$ contains only one non-arc vertex, which is a connected component of $\cAC^{re}(S)$. Removing from $G$ any vertex that maps to this curve, we may assume that $\phi(a)$ is an arc for every $a \in G$.  Assuming further that $\iota(\phi(\alpha),\phi(\beta)) > 3N+1$, we'll show how to modify $\phi$  to decrease this intersection number, without increasing any other intersection number.

Choose an identification of $S$ with $[0,1]\times [0,1] / (x,0) \sim (x,1)$. Tighten each $\phi(a)$ to a Euclidean geodesic and let $s(a)$ be the resulting slope, i.e.\ the $\R$-valued number of times the arc wraps around the annulus. Let's assume without loss of generality that $s(\beta)>s(\alpha)$. Since $\iota( \alpha, \beta)$ differs by at most $1$ from $|s(\alpha) - s(\beta) |$, we have $$s(\beta)-s(\alpha)>3N$$ 
Each of the $N$ cliques of vertices in $G$ gives a set of slopes that lies in an interval of length one in $\R$. Hence, there is an interval $$[x,y] \subset [s(\alpha),s(\beta)]$$ of length $2$ in which no slopes lie. Subtracting $1$ from every slope in $[y,s(\beta)]$ then gives a new embedding of $G$ in which no intersection numbers are increased, $\iota(\alpha,\beta)$ strictly decreases, and where the endpoints of arcs on $\partial S$ are preserved.
\end{proof} 

%

The idea of the proof in the general case is as follows. Assuming that $\phi$ cannot be modified to decrease intersection numbers, we use Theorem \ref{mm} and induction to argue that all the projections of $\phi(G)$ into \emph{proper subsurfaces} have bounded diameter.  Then we perform a more complicated version of the `subtract one from all slopes' argument from the annulus case to show that the diameter of $\phi(G)$ in $\acre(S)$ itself is bounded.
 
\subsection{Proof of Proposition \ref{technical lemma}}
We proceed by induction on $\mathfrak X$.  The distrustful reader can take $\mathfrak X=-1$ as a base case, but really a trivial base case suffices. 

 To avoid excess notation, we will suppress the embedding $\phi$ in the proof of the proposition. So, let $G$ be an  induced subgraph of $\acre(S_{g,n})$ whose vertices can be partitioned into $N$ cliques. Assume that $G$ cannot be re-embedded into $\acre(S_{g,n})$ in a way that satisfies (1) and (2) in the statement of the proposition. That is, one cannot re-embed $G$ to strictly decrease the intersection number of some $\alpha$ and $\beta$ without either increasing some other intersection number or altering the endpoints of some arcs on $\partial S $.  We want to bound the intersection numbers of vertices of $G$.


If $Y\subset S$ is a proper subsurface, we will first fix a preferred realization of $Y$ coming from a hyperbolic metric as follows. If $Y$ is non-annular, choose an arbitrary hyperbolic metric and choose $Y$ to be a metric subsurface of $S$ with totally geodesic boundary. If $Y$ is annular, equip the surface with the metric coming from Lemma \ref{intersecting with the annulus} and identify $Y$ with the metric neighborhood of its core described in the statement of that lemma. 

Then realizing $G$ as a system of geodesic arcs and curves on $S$ equipped with the appropriate metric, $G$ will automatically be in minimal position with respect to $Y$. Intersecting each such arc/curve with $Y$, we obtain a graph $H$ embedded into $\acre(Y)$. Note that we have no a priori control on the number of vertices of $H$: the intersection of a vertex of $G$ with $Y$ may have arbitrarily many connected components, and we want to regard the components as distinct. However, the vertices of $H$ can be partitioned into $N$ cliques, since those of $G $ can. Now if we select  components $a,b$ of $\alpha \cap Y$ and $b$ of $\beta \cap Y$, there is no way to modify the embedding of $H$ in $\acre(Y)$ to decrease $\iota(a,b)$ without increasing the intersection numbers of other vertices of $H$ or changing endpoints on $\partial Y$, since any such modification would extend to a modification of $\phi : G \longrightarrow \acre(S)$. So, as $\mathfrak X(Y) < \mathfrak X(S)$, we have by induction that $\iota(a,b) \leq f(\mathfrak X(Y),N)$.  In particular, we have that the distance between $\alpha,\beta$ in the arc and curve complex of $Y$  satisfies
$$d_Y(\alpha,\beta)\leq 2\cdot \iota(a,b)+4 \leq  2 \cdot f(\mathfrak X(Y),N)+4,$$ 
where the multiplicative $2$ comes from the standard upper bound on distance in $\mathcal{AC}$ in terms of intersection number, and the additive $4$ comes again from this bound and from clause $(3)$ of Lemma \ref{intersecting with the annulus}. 
So, if we choose a cut-off $k$ that is larger than  $2 \cdot f(\mathfrak X(Y),N)+4$  for \emph{every}  proper subsurface $Y \subset S$, each of the summands in  Theorem \ref{mm}  that corresponds to a proper subsurface is zero. Hence, we have
by Theorem \ref{mm} that
\begin{align}
\log \iota(\alpha,\beta) &\le C \left[[ d_S ( \alpha,\beta) \right]]_k + C, \label{ibounds}
\end{align}
 where $C=C(\mathfrak X,N)$  is some computable function. 
If the diameter of $G$ in $\acre(S)$ was bounded by some computable function of $N,\mathfrak X$ then we would be done, but a priori this may not be the case.  However,  we do have the following fact about finite point sets.
  
 \begin{fact}[Small clusters, large gaps]
 Given a function $g : \N \longrightarrow \N$ and a set $G$ of $N$ points in some metric space, we can write $G$ as a union of disjoint subsets $G = \bigcup_i G_i$
  such that for some $D=D(g,N)$ we have
  \begin {enumerate}
  \item $\mathrm{diam}(G_i) \leq D$  for all $i$,
  \item	 $d(G_i,G_j) > g(D)$ for all $i\neq j$.
  \end {enumerate}\label{cluster}
 \end{fact}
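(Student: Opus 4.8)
The plan is to prove Fact~\ref{cluster} by a greedy clustering (single-linkage) argument: start with the partition of $G$ into $N$ singletons, and repeatedly merge any two blocks whose distance is too small, for a \emph{threshold} that grows each time we merge. Concretely, I would define a finite increasing sequence of scales $D_0 < D_1 < \cdots$ by $D_0 = 0$ (or $D_0=1$) and $D_{j+1} = g(D_j) + D_j + 1$ — so each $D_{j+1}$ is a computable function of $D_j$, hence ultimately of $g$ alone. Then I run the following process: at stage $j$, as long as there exist two distinct current blocks $B,B'$ with $d(B,B') \le g(D_j)$, replace them by $B \cup B'$; once no such pair remains, pass to stage $j+1$. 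Since every merge decreases the number of blocks by one, and we start with $N$ blocks, the process terminates after at most $N-1$ merges, hence we reach a stage $j \le N-1$ at which it stabilizes. Set $D = D(g,N) := D_{N-1}$ (or the scale at which the process halts, which is $\le D_{N-1}$), and output the final partition $G = \bigcup_i G_i$.

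The two properties then follow by a short induction on the number of merges performed. For (2): when the process halts at stage $j$, by construction no two distinct final blocks are within $g(D_j)$ of each other, and since the scales are increasing and $D = D_{N-1} \ge D_j$ with $g$ monotone — here I should either assume $g$ is nondecreasing (harmless, since in the application $g$ can be replaced by its nondecreasing envelope) or simply carry the bound $d(G_i,G_j) > g(D_j)$ and absorb it — we get $d(G_i,G_j) > g(D)$, as desired. For (1): I claim by induction that at the \emph{end} of stage $j$ every current block has diameter at most $D_j$. This holds vacuously at the start (singletons, diameter $0 \le D_0$). When two blocks $B, B'$ of diameter $\le D_j$ are merged at stage $j$ because $d(B,B') \le g(D_j)$, the triangle inequality gives
$$\mathrm{diam}(B \cup B') \le \mathrm{diam}(B) + d(B,B') + \mathrm{diam}(B') \le D_j + g(D_j) + D_j \le D_{j+1},$$
and iterating within stage $j$ (each further merge only combines blocks already known to have diameter $\le D_{j+1}$, and $D_{j+1} + g(D_{j+1}) + D_{j+1} \le D_{j+2}$, so a crude bound of $D_{j+1}$ at end of stage $j$ suffices once one is slightly more careful, or one simply enlarges the gaps between scales) one concludes every block at the end of stage $j$ has diameter $\le D_j$. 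Since the process halts by stage $N-1$, every $G_i$ has diameter $\le D_{N-1} = D$.

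The only genuinely delicate point is bookkeeping the diameter bound \emph{through multiple merges within a single stage}: after merging $B$ and $B'$ one may immediately merge the result with a third block $B''$ at the same threshold $g(D_j)$, and naively iterating the triangle inequality $N$ times at a fixed scale would blow up the diameter by a factor depending on $N$ — which is fine, since $D$ is allowed to depend on $N$, but it must still be a \emph{computable} function of $g$ and $N$. I would handle this cleanly by using a coarser scale recursion, e.g.\ $D_{j+1} := N\bigl(g(D_j) + D_j\bigr)$, so that any chain of at most $N$ same-scale merges built from diameter-$\le D_j$ blocks still has diameter $\le D_{j+1}$; this keeps everything computable and the argument goes through verbatim. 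Everything in sight — the number of scales ($\le N$), the recursion defining them, and the termination bound — is manifestly effective, so $D(g,N)$ is computable whenever $g$ is, which is exactly what the application of Theorem~\ref{mm} in the proof of Proposition~\ref{technical lemma} requires.
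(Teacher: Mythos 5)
Your proposal is correct and is essentially the paper's own argument: a greedy merging process starting from singletons, with the scale updated by an iterated, computable function of $g$, terminating after at most $N-1$ merges. The paper's version simply updates the diameter bound after \emph{every individual} merge (replacing $D$ by $g(D)+2D$), which sidesteps the within-stage bookkeeping you handle with the coarser recursion $D_{j+1}=N\bigl(g(D_j)+D_j\bigr)$; otherwise the two proofs are the same.
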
 
\begin{proof}[Proof of Fact \ref{cluster}]
 The proof is by induction. Start with the $G_i$ as singleton sets, and begin combining them. If the current diameter is $D$ and all the sets are $g(D)$-separated, we are done. If not, combine two close sets, replace $D$ with $g(D)+2D$ and continue. This process terminates since $N$ is finite.
\end{proof}

 Clearly, the fact also applies to our $G$, which is a union of $N$ cliques. So, leaving $g$ unspecified for the moment, let $G= \bigcup_i G_i$ and $D$ be as above.    We claim that it is possible to move each $G_i$ with a mapping class $f_i : S \longrightarrow S$ so that 
\begin {enumerate}
\item $d(f_i(G_i),f_j(G_j)) \geq 2$ if $i\neq j$,
\item all intersection numbers between vertices of $\bigcup_i f_i(G_i)$ are bounded by some constant $B=B(\mathfrak X,N,D).$
\end {enumerate}
 Here, the first condition ensures that the resulting union $\bigcup_i f_i(G_i)$ is still a subgraph of $\acre(S)$  isomorphic to $G$. Now as long as we had picked $$g : \N \longrightarrow N, \ \ g( \cdot ) > \log B(X,N,\cdot ),$$
 we will have for $a_i\in G_i$ and $a_j\in G_j$, where $i\neq j$, that $$\iota(a_i,a_j) \geq 2^{d(a_i,a_j)} > B \geq \iota(f_i(a_i),f_j(a_j))$$  so intersection numbers strictly decrease  when we replace $G$ with $\bigcup_i f_i(G_i)$.

 We define the mapping classes $f_i$ one by one, starting with $f_1 = id$. Since $G$ is a union of $N$ cliques, we can further decompose it as a union of at most $\mathfrak X \cdot N$ \emph {parallel cliques}, consisting of arc/curves that have the same projections to the usual arc and curve graph $\mathcal{AC}(S)$. In both $G_1$ and $G_2$, discard for the moment all but one representative of each parallel clique, so that the number of vertices in each is at most $\mathfrak X \cdot N$. Pick
minimal position arcs and curves representing these vertices and then separately for each $i=1,2$, extend their union on $S$ to triangulations $T_1,T_2$ of $S$ without adding new vertices.   Since the intersection number of any two  vertices in the same $G_i$ is bounded above by some computable function of $\mathfrak X,N,D$ and the number of vertices is also bounded, the total numbers of triangles in $T_1,T_2$ are also bounded in terms of $\mathfrak X,N,D$. We now use:

\begin {lemma}\label {lemma1}
There is a computable function	$K=K(n)$  such that if $T_1, T_2 $ are triangulations of a common surface $S$ that each have at most $n$ triangles, then there are refinements of $T_1,T_2$, each with at most $K$ triangles, that are combinatorially isomorphic.\end {lemma}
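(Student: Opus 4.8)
The plan is to produce a common refinement by overlaying the two triangulations, viewing each as a spine of a ribbon graph, and then triangulating the resulting cell decomposition. First I would put $T_1$ and $T_2$ in \emph{general position} with respect to each other: isotope $T_2$ so that every edge of $T_2$ meets every edge of $T_1$ transversely, in finitely many points, with no vertex of one triangulation lying on an edge of the other. The key point is that this can be done with a \emph{bounded} number of crossings. Indeed, two edges in a fixed triangulation $T_1$ of a surface with $n$ triangles can be realized so that they cross at most $O(n)$ times — an edge of $T_2$ is an arc on $S$, and its minimal-position representative crosses the $1$-skeleton of $T_1$ a number of times bounded in terms of $n$, because the complement of the $1$-skeleton of $T_1$ is a union of $n$ triangles (disks), and a simple arc crosses at most $O(n)$ of them (one can bound this via an Euler characteristic / surface-complexity argument, or simply note that once we also fix $T_2$, the complexity $\mathfrak X(S) = 3g-3+n_{\mathrm{punctures}}$ is itself bounded by $n$, so $S$ ranges over finitely many topological types, and on each the relevant minimal intersection numbers are bounded). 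So the overlay $T_1 \cup T_2$ has at most $K_0 = K_0(n)$ vertices (original vertices plus crossing points), and hence at most $K_0(n)$ edges and faces by Euler's formula (each face is a disk since $T_1$ alone already cuts $S$ into disks).

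Next I would turn this cell decomposition $\Delta := T_1 \cup T_2$ into a genuine triangulation by a canonical subdivision — for instance, barycentrically subdivide, or add one vertex in the interior of each face of $\Delta$ and cone. Since $\Delta$ has at most $K_0(n)$ cells, the subdivided complex $\widehat\Delta$ has at most $K(n)$ triangles for a computable $K$. Crucially, $\widehat\Delta$ refines $\Delta$, and $\Delta$ refines both $T_1$ and $T_2$ in the sense that every edge of $T_i$ is a union of edges of $\Delta$ and every face of $T_i$ is a union of faces of $\Delta$. Therefore $\widehat\Delta$ is simultaneously a refinement of $T_1$ and of $T_2$. Running the same canonical subdivision recipe from each side produces \emph{the same} complex $\widehat\Delta$ — or if one prefers to keep the two constructions formally separate, one builds $\widehat\Delta$ once from the overlay and observes it refines both — so the two refinements are literally equal, in particular combinatorially isomorphic.

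The main obstacle is the bounded-overlay step: showing that $T_1$ and $T_2$ can be made transverse with only $K_0(n)$ crossings. Everything after that is a routine, manifestly computable subdivision. The cleanest way to handle the obstacle is to observe that fixing $T_1$ and $T_2$ with $\le n$ triangles each pins down the topological type of $S$ (its complexity is at most $n$), so we may fix a hyperbolic metric and take all edges to be geodesics; then each edge of $T_2$ is a simple arc, its geometric intersection number with each edge of $T_1$ is finite, and the total is bounded because the number of edges is $\le 3n$ and each pairwise geometric intersection number is bounded by a function of $n$ (e.g.\ via the collar lemma estimates of Lemma \ref{intersecting with the annulus}, or any standard bound on intersection numbers of systems of arcs in terms of surface complexity). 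This gives the required $K_0(n)$, and the lemma follows.
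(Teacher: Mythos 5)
There is a genuine gap here, and it is fatal to the whole strategy: the ``bounded-overlay'' step is false. The minimal number of crossings between $T_1$ and $T_2$ is \emph{not} bounded by any function of $n$. Take $T_2=\varphi^k(T_1)$ for a Dehn twist or pseudo-Anosov $\varphi$: for every $k$ both triangulations have the same number of triangles, but the geometric intersection number between their $1$-skeleta grows without bound as $k\to\infty$ (exponentially in the pseudo-Anosov case). Your two justifications do not repair this. First, the fact that $S$ has bounded topological complexity does not bound intersection numbers of arcs and curves on $S$: there are infinitely many isotopy classes of triangulations with $\le n$ triangles, with arbitrarily large pairwise intersection numbers---indeed the whole point of Lemma \ref{lemma2} in the paper is to produce a triangulation and its image under a mapping class that are far apart in exactly this sense. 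Second, a simple arc in minimal position can enter and exit a single triangle of $T_1$ arbitrarily many times; simplicity controls self-intersections, not intersections with a fixed edge. Since any genuine common refinement of $T_1$ and an isotoped copy of $T_2$ must have at least as many vertices as the minimal intersection number of their $1$-skeleta, no overlay-based construction can have size bounded by a computable function of $n$ alone.

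The underlying misreading is that the lemma does not ask for a common refinement sitting inside $S$: it asks only for refinements of $T_1$ and of $T_2$ that are \emph{abstractly} combinatorially isomorphic, where the isomorphism may be induced by a homeomorphism of $S$ far from the identity. That extra freedom is essential, and it is exactly how the paper argues: by induction on the complexity of $S$, with the disc as base case, where triangulations are computably refined and realized as Euclidean triangulations of a convex polygon---there the overlay idea does work, because straight segments cross at most once---and with an inductive step that finds a suitable curve or arc in each $1$-skeleton, \emph{modifies $T_2$ by a homeomorphism of $S$} so that the two triangulations agree along it, then cuts along it and applies induction to the lower-complexity pieces. To salvage your plan you would have to build in this ability to move $T_2$ by a mapping class before overlaying; without it, the statement you are actually proving (a bounded-size common refinement in situ) is simply false.
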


 Assuming the lemma, we choose $f_2$  to be the map $f$ realizing the isomorphism in the lemma. Since all the vertices of $G_1,G_2$ can be realized as closed paths with no edge repeats on $T_1,T_2$, the intersection number  between any vertices of $G_1$ and $f_2(G_2)$ is now  bounded by some function of $\mathfrak X,N,D$.  Moreover,  a similar bound (perhaps increased by $2$) will hold if we add back to $G_1$ and $f_2(G_2)$ all the previously deleted members of the parallel cliques, since if $a',b'$ are disjoint from $a,b$, respectively, and have the same projections to $\mathcal AC(S)$, then the intersection number $\iota(a',b')$ differs by at most one from $\iota(a,b)$.  The only problem is that we may not have $d(G_1,f_2(G_2)) \geq 2$ anymore, as required above. 

To remedy this, we will need the following.
 
 \begin {lemma}\label {lemma2}
 	There is a computable function	$K=K(n,M)$  such that if $T$ is a triangulation of a surface $S$  that has  at most $n$ triangles and $M > 0$, there is a homeomorphism $h : S \longrightarrow S$  that acts with translation length at least $M$  on $\acre(S)$, such that $T,h(T)$ are transverse and intersect  at most $K$ times.
 \end {lemma}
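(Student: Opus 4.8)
The plan is to realize the desired homeomorphism $h$ as an explicit composition of bounded-complexity pieces supported on a subsurface of $S$, then check that it moves the triangulation a bounded amount while acting with large translation length on $\acre(S)$. First I would observe that since $S$ carries a triangulation with at most $n$ triangles, its complexity $\mathfrak X$ is bounded in terms of $n$, so it suffices to produce, for each fixed topological type of $S$, a homeomorphism with translation length exactly one on $\acre(S)$ whose intersection with a given triangulation is bounded by a universal constant, and then iterate: composing such a map with itself $M$ times (or rather, composing $M$ copies of it conjugated so as to have disjoint support modifications) gives translation length at least $M$, and I would track how the intersection number grows. The cleanest route is to take $h_0$ to be a pseudo-Anosov on $S$ (which exists since $(g,n)\neq(0,1)$ and we may assume $\mathfrak X\geq 1$; the annulus and sphere-with-few-punctures cases are handled separately using the Farey graph, where a parabolic or hyperbolic element of $\mathrm{PSL}_2(\mathbb Z)$ does the job). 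A pseudo-Anosov acts on $\cC(S)$, hence on $\acre(S)$, with positive (hence $\geq 1$, as these are integer distances) translation length, so $h_0^{M}$ has translation length at least $M$.

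Next I would bound $\iota(T, h_0^M(T))$. The naive bound $\iota(T,h_0^M(T))\leq C\cdot \lambda^{M}$ for the dilatation $\lambda$ is exponential in $M$ and thus too weak for a bound of the form $K(n,M)$ — but in fact $K(n,M)$ is allowed to depend on $M$ arbitrarily, so an exponential bound is fine. Concretely: fix a pseudo-Anosov representative $h_0$ of bounded complexity relative to $T$ (e.g.\ chosen from a finite list of mapping classes whose complexity relative to a bounded-complexity triangulation is universally bounded, depending only on the topological type of $S$, of which there are finitely many once $n$ is fixed by the at-most-$n$-triangles hypothesis). Then $\iota(e,h_0(e'))$ is bounded for edges $e,e'$ of $T$, and applying $h_0$ repeatedly the intersection numbers grow at a controlled rate, giving $\iota(T,h_0^M(T))\leq K(n,M)$ for a computable $K$. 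The transversality of $T$ and $h_0^M(T)$ can be arranged by a small isotopy that does not change intersection numbers.

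The main obstacle I anticipate is the uniformity over topological types: the statement asks for a single computable function $K(n,M)$ working for \emph{all} surfaces with an $n$-triangle triangulation, and although the Euler characteristic is pinned down by $n$, the number of punctures is not bounded by $n$ in general — wait, it is: a triangulation with $n$ triangles has at most $\tfrac{3n}{2}$ edges and the number of vertices (hence punctures, in the ideal-triangulation picture, or boundary-adjacent vertices) is bounded by $n$ as well, so only finitely many surfaces $S$ admit a triangulation with at most $n$ triangles. Thus one may simply take $K(n,M)$ to be the maximum, over this finite list of surfaces and over a finite list of bounded-complexity pseudo-Anosov (or Farey) generators, of the intersection numbers $\iota(T,h_0^M(T))$ as $T$ ranges over the finitely many combinatorial types of $\leq n$-triangle triangulations and their finitely many isotopy classes of geodesic realizations in a fixed metric; this is manifestly computable. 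The remaining point to verify carefully is that the chosen $h_0$ genuinely has positive translation length on $\acre(S)$ and not merely on $\cC(S)$ — this follows because the natural map $\cC(S)\to\acre(S)$ (or the projection $\acre(S)\to\cC(S)$ sending an arc to a boundary curve of a regular neighborhood of the arc union $\partial S$) is coarsely Lipschitz and $h_0$-equivariant, so positivity of translation length transfers, completing the argument.
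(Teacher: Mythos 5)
There is a genuine gap at the central step of your argument: the claim that a pseudo-Anosov has translation length ``positive (hence $\geq 1$, as these are integer distances)'' on $\cC(S)$, so that $h_0^M$ has translation length at least $M$. For the power argument you need the \emph{stable} translation length $\tau(h_0)=\lim_n d(x,h_0^n x)/n$, since only this quantity is additive under taking powers; and stable translation lengths of pseudo-Anosovs on $\cC(S)$ are typically \emph{less} than $1$ (the minimal one decays roughly like $1/\mathfrak X^2$ as the complexity grows). What integrality gives you is only that the displacement $d(x,h_0x)$ of every vertex is $\geq 1$, but minimal displacement is not superadditive under powers: an element can displace every vertex by at least $1$ while $h_0^M$ displaces some vertex by only about $\tau(h_0)M+C\ll M$. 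So as written your $h_0^M$ need not have translation length $\geq M$, and moreover for computability of $K(n,M)$ you need the power you take to be computable. This is exactly the point the paper handles by citing Masur--Minsky's uniform lower bound $\epsilon(S)>0$ on stable translation lengths of pseudo-Anosovs, in its computable form (Bowditch, Gadre--Tsai), and then taking $h=(\tau_\beta^{-1}\tau_\alpha)^k$ with $k$ computable so that $k\epsilon>M$. Your argument can be repaired the same way, at the cost of replacing the exponent $M$ by $\lceil M/\epsilon\rceil$ in your intersection estimate; a minor additional slip is that a parabolic element of $\mathrm{PSL}_2(\Z)$ fixes a vertex of the Farey graph and so has translation length $0$, hence does not ``do the job'' in the low-complexity cases.

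Apart from this, your outline runs parallel to the paper's: the paper also builds $h_0$ adapted to $T$ (as $\tau_\beta^{-1}\tau_\alpha$ for filling curves $\alpha,\beta$ realized, after a computable refinement, in the $1$-skeleton of $T$, pseudo-Anosov by Thurston's construction), bounds $\iota(T,h(T))$ twist by twist rather than by a finite-list/maximum argument, and needs the same transfer of translation length from $\cC(S)$ to $\acre(S)$ that you sketch at the end. Your ``finitely many combinatorial types of $\leq n$-triangle triangulations'' reduction is a reasonable alternative to the paper's explicit construction, but as stated it leaves vague how the chosen $h_0$ is produced and how $\iota(T,h_0^k(T))$ is actually computed for each type; the paper's explicit twist construction is what makes the bound manifestly computable.
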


  One then applies Lemma \ref{lemma2} to $f_2(T_2)$, with $M$ equal to the sum of the $\acre(S)$-diameters of $G_1,f_2(G_2)$. The given $h$ moves $f_2(G_2)$ so that none of its vertices are adjacent to vertices of $G_1$, while keeping intersection numbers  controlled, so we can replace $f_2$ with $g \circ f_2$. This  finishes the proof of the proposition if $G=G_1\cup G_2$. If there are more than two sets in the union, we continue inductively. Combine $G_1$ and $f_2(G_2)$ into a single set, and then run the argument above to find some $f_3$ so that all intersection numbers between  vertices of $G_1 \cup f_2(G_2)$ and $f_3(G_3)$ are bounded, while using Lemma \ref{lemma2} to ensure that $f_3(G_3)$ is not adjacent to the previous two sets.  Continuing this process with $G_4,G_5,\ldots$ proves the proposition, for as the total number of the $G_i$ is at most $N$, the final bounds on intersection number will be computable in terms of $\mathfrak X,N,D$.

\vspace{2mm}

 It remains to prove the two lemmas. 
 
 \begin {proof}[Proof of Lemma \ref{lemma1}]
Via induction on the complexity of $S$, we will prove the stronger statement that \emph{if $T_1,T_2$ are triangulations of $S$ with at most $n$ triangles that agree on $\partial S$, then there are refinements of $T_1,T_2$, each with at most $K(n)$  triangles, that are combinatorially isomorphic via a map that is the identity on $\partial S$.} Note that this  implies the lemma, since one can subdivide and isotope any two triangulations so that they agree on  $\partial S$ while adding only computably many vertices.

Assume first that $S$ is a disc, which we identify with a convex polygon in $\R^2$. We claim that if $T$ is a triangulation of $S$, then after passing to a refinement whose complexity is computably bounded in terms of that of $T$, we can isotope $T$ rel $\partial S$ to be a \emph{Euclidean} triangulation of $S$.  A pair of line segments in $\R^2$ can intersect at most once, so if we apply the above to two triangulations $T_1,T_2$, the `intersection' of the resulting Euclidean triangulations will be a common refinement of $T_1,T_2$ whose complexity is computably bounded in terms of the complexities of $T_1,T_2$.

To do this, one first proves that any triangulation $T$ of a topological disc can be computably refined to be isomorphic to a Euclidean triangulation of \emph{some} convex polygon $P$.  This is done by induction on the number of triangles of $T $, and the base case is trivial. For the inductive case, take a triangulation $T$ and a triangle $\Delta \subset T$ such that $T \setminus \Delta$ is still a topological disc. (Such $\Delta$  correspond to vertices of the dual graph that don't separate.) Choose a polygonal realization $P$ of a refinement of $T \setminus \Delta$, as given by the inductive step.  The union of the two interior sides of $\Delta$ is a concatenation of at most $4 \cdot 8^{n-1}$ line segments, so after subdividing $\Delta$ into $8^n$ triangles we can append it to $P$  as indicated below.

\begin{figure}[h]
	\centering
	\includegraphics{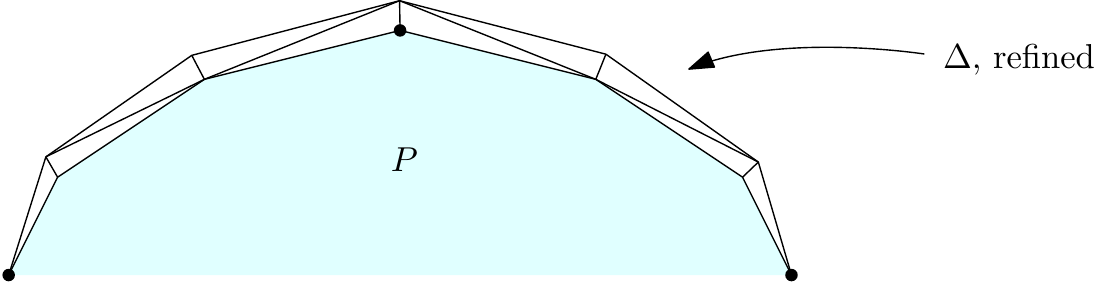}
\end{figure}

Now pick a triangulation $T$ of our convex polygon $S$. After  passing to a computable refinement of $T $, there is a combinatorial isomorphism $$f  : ( S' ,T ') \longrightarrow (S,T )$$ for some Euclidean triangulation $T '$ of a convex polygon $ S' $. As long as $T $ has been refined to include all the vertices of $S$, we may assume that $f$ is affine on each edge of $\partial S'$. Find a Euclidean triangulation $T'' $ of $ S' $ with no interior vertices that agrees with $T '$ on $\partial S'$. Then  the map $f $ can be isotoped rel $\partial S'$ to be affine on each triangle of $T'' $. The image $f (T ')$ is a triangulation of $S$ isotopic rel $\partial S$ to $T $. Since each pair of edges from $T'' $ and $T '$  intersects at most once, the edges of $f (T ')$ are piecewise linear with at most a computable number of corners. Hence, $f (T ')$ can be computably refined to a Euclidean  triangulation of $S$.

 Now suppose that $S $ is some general  compact orientable surface  and $T $  are triangulations of $S $.  Suppose first that $S $ has at most one boundary component.  If $S$  is a disc or a sphere, the conclusion follows from the base case.  Otherwise, there is a closed path $\gamma $ on  the $1$-skeleton of $T $ that is nontrivial in $H_1(\hat S)$,  where $\hat S$ is the closed surface obtained by capping off $\partial S$, if it is nonempty.  If we take $\gamma  $ to have minimal length, it cannot repeat vertices, and hence is a nonseparating   simple closed curve on $S$. After passing to a computable refinement and modifying $T_2$, say, by a homeomorphism of $S$ that is the identity on $\partial S$, we can assume that the two curves $\gamma $ are both some fixed curve $\gamma \subset S$ and the triangulations $T $ agree along $\gamma $. 

Cutting $S$ along $\gamma$, we obtain two triangulations of a new surface with lower complexity. By induction, after passing to  computable refinements there is a  combinatorial isomorphism between these triangulations that is the identity on $\partial S \cup \gamma$.  This isomorphism then glues to give an isomorphism rel $\partial S$ of $T_1$ and $T_2$.

The case when $S$ has more than one boundary component is similar, except that now instead of cutting along a non-separating simple closed curve, which may not exist, we cut along an arc connecting two distinct boundary components of $S$.
%
 \end {proof}

 \begin {proof}[Proof of Lemma \ref{lemma2}]
 	After refining $T$, we can find a pair of filling simple closed curves $\alpha,\beta$ that appear as cycles in its $1$-skeleton. (Construct a triangulation of $S$ with boundedly many vertices that includes such $\alpha,\beta$ in its $1$-skeleton and apply the previous lemma.)   Without loss of generality, we can pass to such a refinement, since the new number of vertices will be some computable function of $n$.

 Let $\tau_\alpha$ and $\tau_\beta$  be the Dehn twists  around $\alpha,\beta$,  respectively. By a theorem of Thurston \cite{Fathitravaux}, $\tau_\beta^{-1} \circ \tau_\alpha$  is a pseudo-Anosov map, so has (stable) translation distance at least some computable constant $\epsilon=\epsilon(S)>0$, by a theorem of Masur--Minsky \cite[Prop 2.1]{Masurgeometry2}. (See also Bowditch \cite{bowditch2008tight} and Gadre--Tsai \cite{gadre2011minimal} for more recent explicit bounds.) Hence, there is some computable $k$ such that $$h=(\tau_\beta^{-1} \circ \tau_\alpha)^k$$ has translation length bigger than $M$.

 It remains to bound $\iota(T,h(T))$.
If $\tau_\alpha$  is the Dehn twist around $\alpha$, then $$\iota(T,\tau_\alpha(T)) \leq 100 n^2.$$ Without looking for optimal constants, one can justify this by  noting that edges incident to $\alpha$ are twisted around it in $\tau_\alpha(T)$, so that they now intersect all of the other edges incident to $\alpha$ in the original triangulation $T$. This is the reason for the quadratic exponent. The constant $100$ is there to overcompensate for the additional intersections between $T$ and $\tau_\alpha(T)$ that one sees elsewhere in the surface after perturbing the two triangulations to be transverse.

Now, $\beta$ intersects $\tau_\alpha(T)$ at most $100n^2$ times, so a similar argument gives a computable bound for $\iota(T,\tau_\beta^{-1} \circ \tau_\alpha(T))$, which in particular gives a computable about for $\iota(\alpha,\tau_\beta^{-1} \circ \tau_\alpha(T))$.   Iterating  this process twist by twist, one obtains a computable (if terrible) bound for $\iota(T,h(T))$. \end {proof}

\bibliographystyle{hmath}
\bibliography{total}

\end{document}